\let\mathcal=\mathscr
\newtheorem{thm}{Theorem}[section]
\newtheorem{corol}[thm]{Corollary}
\newtheorem{lemma}[thm]{Lemma}
\newtheorem{prop}[thm]{Proposition}
\theoremstyle{remark}
\newtheorem{rem}[thm]{Remark}
\newtheorem{ex}[thm]{Example}
\newenvironment{remark}{\begin{rem}\rm}{\qee\end{rem}}
\newcommand{\End}{\mbox{\it End}}
\newcommand{\cE}{{\mathcal E}}
\newcommand{\cO}{{\mathcal O}}
\newcommand{\CC}{{\mathcal C}^\infty}
\newcommand{\Ext}{\mbox{\rm Ext}}
\newcommand{\ati}{{{\mathcal D}_{\cE}}}
\newcommand{\Lie}{\hbox{\em\char'44}\!}
\newcommand{\C}{{\mathbb C}}
\newcommand{\Q}{{\mathfrak Q}}
\newcommand{\bbL}{{\mathbb L}}
\newcommand{\qee}{\mbox{\hspace{0.2mm}}\hfill$\triangle$}
\newcommand{\ccL}{{\mathcal L}}
\newcommand{\tr}{\operatorname{tr}}
\begin{document}
\begin{center}
{\Large\bf On localization in holomorphic \\[5pt] equivariant cohomology} \\[10pt]
{\sc U. Bruzzo} \\ Scuola Internazionale Superiore di Studi Avanzati, \\ Via Bonomea 265, 34136 Trieste, Italy;  \\
 Istituto Nazionale di Fisica Nucleare, Sezione di Trieste\\ E-Mail: {\tt bruzzo@sissa.it} \\[10pt]
{\sc V. Rubtsov} \\
Universit\'e d'Angers, D\'epartement de Math\'ematiques, \\
UFR Sciences, LAREMA, UMR 6093 du CNRS,\\
2 bd.~Lavoisier, 49045 Angers Cedex 01, France; \\
ITEP Theoretical Division, 25 Bol.~Tcheremushkinskaya,\\ 117259, Moscow, Russia
\\ E-mail: {\tt Volodya.Roubtsov@univ-angers.fr}
\end{center} \vfill
\begin{quote} \small  {\sc Abstract.}
We study a holomorphic equivariant cohomology built out of the Atiyah algebroid of an equivariant holomorphic vector bundle and prove a related localization formula. This encompasses various residue formulas in complex geometry, in particular we shall show that it contains as   special cases Carrell-Liebermann's and Feng-Ma's residue formulas, and Baum-Bott's formula for the zeroes of a meromorphic vector field.

\end{quote}
\par

\vfill
\parbox{.75\textwidth}{\hrulefill}\par
\begin{minipage}[c]{\textwidth} \small 
 \noindent   
Date: \today  \\
{\em 2000 Mathematics Subject Classification:} 32L10, 53C12, 53C15, 53D17, 55N25, 55N91\par\noindent
The authors gratefully acknowledge
financial support and hospitality during the respective visits to
Universit\'e d'Angers and SISSA. Support for this work was provided by {\sc misgam,} by   the I.N.F.N.~project PI14 ``Nonperturbative dynamics of gauge theories", the {\sc einstein} Italo-Russian project  ``Integrability in topological string and field theory,''  the {\sc matpyl}   Angers-SISSA project
``Lie algebroids, equivariant cohomology, and topological quantum field and string theories,'' and the {\sc geanpyl} programme.
\end{minipage}

\newpage

\section{Introduction}
 
In his fundamental paper \cite{Bott67}, Bott wrote formulas allowing to express
the Chern numbers of a compact manifold $X$ in terms of the zeroes of a vector field
on $X$. This was done in two cases, for a holomorphic vector field (when $X$ is a complex manifold), and for a vector field preserving a Riemannian metric on $X$. The second case was later better understood in the setting of equivariant cohomology \cite{AtiBott}: one can introduce an equivariant de Rham complex, and the integral of any cocycle can be expressed in terms of its restrictions to the fixed points of the group action (which is an $S^1$ action for the case of a vector field preserving a Riemannian metric).

The analogy between the two cases, the holomorphic and the Riemannian one, is not complete. However, it is possible to define some kind of ``formally equivariant'' Dolbeault complex and derive from it localization formulas, see e.g.~\cite{Liu}. Our goal in this paper is to propose
a comprehensive approach to holomorphic equivariant cohomology. We start from
the holomorphic
Atiyah algebroids $\ati$ associated to a holomorphic vector bundle $\cE$  on a complex manifold $X$, and assuming that there is a vector field on $X$ that may be lifted to a section of $\ati$, we
introduce a holomorphic equivariant complex and prove a related localization formula (Section
\ref{atiyahsec}). When $\cE=\{0\}$ this   cohomology essentially reduces to K.~Liu's
holomorphic equivariant cohomology \cite{Liu}. 
By considering a twist by an effective line bundle, we can also prove a general localization formula related to the action of a meromorphic vector field.

Our localization formula contains the Baum-Bott   formula \cite{BaumBott}, Feng-Ma's localization formula \cite{Feng-Ma05},  
and  the Carrell-Liebermann residue formula \cite{CL,Carr} as special cases.
In all instances our approach produces significant simplifications with respect to the original proofs. Let us explain in which sense these formulas are special cases
of ours. One of the ingredients of the theory is a (in general, meromorphic)
vector field $V$ on $X$, which is lifted to a first-order differential operator
acting on a holomorphic vector bundle $\cE$. This lift is in general obstructed,
therefore one needs to assume its existence. This
is what we do, and is one of the assumptions of the Carrell-Liebermann formula;
our formula is more general than the latter because we compute the integral of any cocycle in a certain cohomology complex that we associate with $V$ and $\cE$,
while the Carrell-Liebermann formula computes the integral of a polynomial
in the Atiyah class of $\cE$.

A special situation arises when $\cE$ is the holomorphic tangent
bundle $\Theta_X$. If the vector field $V$ is holomorphic,
it always has a   lift to a differential operator on $\Theta_X$,
namely, the Lie derivative $\Lie_X$. Thus one gets
a generalization of Bott formula \cite{Bott67}, or, if one integrates
a polynomial in the Chern classes of $X$, Bott's formula itself.

If $V$ is meromorphic, i.e., it is a section of $\Theta_X\otimes \mathcal L$ for
some effective line bundle $\mathcal L$,  in general it does not lift. There are two ways out of this problem.
Either one just treats the special case when the obstruction
vanishes; this produces a particular case of our
localization formula  \eqref{locatimero}. Or, one 
realizes that even if  $V$ does not lift,
it actually defines a residue (as noticed by Carrell \cite{Carr2}, 
see also section \ref{BB} in this paper). However, this 
 residue
does not compute a characteristic number of the tangent bundle $\Theta_X$, 
but rather a characteristic number of the virtual bundle $\Theta_X-\mathcal L^\ast$.
Thus one obtains  Baum-Bott's formula.

Further investigations along this line would naturally lead to consider generalizations
of the holomorphic Lefschetz formulas and applications to Courant algebroids,
e.g., in connection to generalized complex geometry (cf.~\cite{Li}).

\smallskip

{\bf Acknowledgments.} This paper was drafted while the first author
was visiting IMPA in Rio de Janeiro, and was finalized while he was holding a   visiting position at IHES in Paris. He gratefully acknowledges support and    hospitality   from both institutions.  The second author thanks LPTM of the Cergy-Pontoise University for hospitality during his CNRS delegation. We thank Professors Yvette Kosmann-Schwarzbach and Alexei Rosly for their interest in this work.  

\bigskip

\section{Twisted holomorphic equivariant cohomology}\label{atiyahsec}

Let $X$ be an $n$-dimensional compact   complex manifold. We shall denote by $\Theta_X$ its holomorphic tangent bundle and by $T_X$ its tangent bundle when $X$ is regarded as a $2n$-dimensional smooth differentiable manifold. $\Omega^i_X$ will denote the bundle of holomorphic $i$-forms on $X$, while $\Omega^i_{X,\C}$ will denote the bundle
of complex-valued smooth $i$-forms, and $\Omega^{p,q}_X$ the bundle of
forms of type $(p,q)$. (In general, we shall not use a different notation for a bundle
and its sheaf of sections.) The symbol $\Gamma$ will denote global sections.

We shall not use any Lie algebroid theory here but one should note that our constructions are quite heavily motivated and suggested by that theory \cite{ELW99,Hue,algloc}.

\subsection{The Atiyah algebroid}
If $\cE$ is a (rank $r$) holomorphic vector bundle on $X$ we shall denote by $\ati$ the bundle of first order differential operators on $\cE$ with scalar symbol. $\ati$ sits inside
an exact sequence of sheaves of $\cO_X$-modules
\begin{equation} \label{atiyah} 0 \to  \End(\cE) \to \ati \xrightarrow{\sigma } \Theta_X \to 0
\end{equation}
where   $\sigma$  is the symbol map. $\ati$ is actually an example of a {\em holomorphic  
Lie algebroid} \cite{ELW99,Hue}, with bracket defined on its   sections by the   commutator of differential operators, and the symbol map $\sigma$ playing the role of anchor.

The class in $a(\cE)\in\Ext^1(\Theta_X,\End(\cE))$ defining the extension \eqref{atiyah}
is called the {\em Atiyah
class} of $\cE$, and it is the obstruction to the existence of holomorphic connections
on $\cE$ \cite{Atiyah}. 

We shall assume throughout that a global holomorphic vector field $V$ on $X$
has been fixed, which admits a lift 
$\tilde V$  in $\Gamma(\ati)$. The pair $(\cE,\tilde V)$ is called
an {\em equivariant holomorphic vector bundle.}

\subsection{The cohomology complex}\label{eqholcohom}
We want to define a ``holomorphic equivariant'' cohomology complex
associated with the Atiyah algebroid $\ati$.  We set
\begin{gather}D_\cE = [\CC_X\otimes_{\cO_X}\ati] \oplus T^{0,1}_X \nonumber \\[3pt]
Q^{p,q}_{\cE} = \Lambda^{p}\ati^\ast\otimes_{\cO_X}\Omega^{0,q}_X \nonumber \\[3pt]
Q^k_{\cE} = \Lambda^k D_{\cE}^\ast = \bigoplus_{p+q=k} Q^{p,q}_{\cE} \label{deco}
\,.\end{gather}

Note that the line bundle $\det(D_{\cE})$ is isomorphic to the determinant
of the complexified tangent bundle $T_X\otimes\C$. Moreover,   combining the symbol map with the inner product one defines a  morphism
$$q\colon \Lambda^kD_{\cE}^\ast\otimes \det(D_{\cE}) \to \Lambda^{2n-k+r^2}(T_X\otimes \C).$$
We define a morphism  $p\colon Q_{\cE}^k\to\Omega^{k-r^2}_{X,\C}$ by setting\footnote{Here we use the symbol $\lrcorner$ to denote inner product, however for typographical reasons later on we shall also use the symbol $\imath$.}
$$ p(\psi)=(-1)^k\,q(\psi\otimes \alpha)\lrcorner \eta $$
if $\psi$ is a section of $\Lambda^kD_{\cE}^\ast $, and $\alpha\in \Gamma(\det(D_{\cE}))$ and $\eta$  in $\Omega^{2n}_{X,\C}(X)$ are such that $\eta(\alpha)=1$.
 This map is an  isomorphism when $\cE=\{0\}$. By using this map we can integrate sections of $ Q_{\cE}^\bullet$, i.e., if $\gamma\in  Q_{\cE}^\bullet(X)$, and $X$
 is compact,  by $\int_X\gamma$ we mean $\int_Xp(\gamma)$.

We define an ``equivariant'' complex
$$\Q^\bullet_{\cE}=\C[t] \otimes_{\C} Q^\bullet_{\cE} (X)$$
with the usual equivariant grading
$$\deg({\mathcal P}\otimes\beta) = 2\deg({\mathcal P})+\deg\beta$$
if $\mathcal P$ is a  monomial in $t$ and $\beta\in Q^\bullet_{\cE}$.
We also define a differential
 $$\tilde\delta_V=\bar\partial_{\ati} - t\,\imath_{\tilde V}$$
 where $\bar\partial_{\ati} $
  is  any of the Cauchy-Riemann operators
 of the holomorphic bundles $\Lambda^{k}\ati^\ast$.
 We have $\tilde\delta_V\colon \Q^\bullet_{\cE}\to\Q^{\bullet+1}_{\cE}$, and
 an easy computation shows that $\tilde\delta_V^2=0$, so that
 $(\Q_{\cE}^\bullet,\tilde\delta_V)$ is a cohomology complex.
 We denote its cohomology by  $\mathfrak H_{\tilde V}^\bullet(\cE)$. 

There is a relation between the complex $\mathfrak Q_0^\bullet$ that one obtains by setting
$\cE=\{0\}$ in
$\mathfrak Q_\cE^\bullet$
and  Liu's holomorphic equivariant de Rham complex  \cite{Liu}. Liu's complex is defined  letting  
\begin{equation}\label{LiuComplex}A^{(k)} = \bigoplus_{q-p=k}\Omega^{p,q}(X)\end{equation}
 with a differential
 $$ \delta_t = \bar\partial - t\,\imath_V$$
 for some value of $t$. One has, for every $t$,   cohomology complexes $(A^{(k)},\delta_t)$,
 where the index $k$ ranges from $-n$ to $n$. Liu shows that the corresponding cohomology groups
 $H^{(k)}_t(X)$ are independent of $t$, provided $t\ne 0$.  We shall denote them by
$H^\bullet_{\mbox{\footnotesize\rm Liu}}(X)$.
An explicit computation
 shows the following relation. Let us denote by $\mathfrak H_{ V}^\bullet(X)$ the
 cohomology groups $\mathfrak H_{\tilde V}^\bullet(\cE)$ corresponding to the case
 $\cE=\{0\}$. Moreover, let  $\tilde {\mathfrak H}^{2n+k}_V(X)$ be the subspace of $\mathfrak H^{2n+k}_V(X)$
 formed by classes of cocycles in the subspace $\oplus_p[\C[t]\otimes\Omega^{p,p+k}(X)]$
 of $\oplus_j \mathfrak Q^j_0$.

 \begin{prop} \label{liu} For every $k=-n,\dots,n$, and every $t^\ast\in\C^\ast$,  the cohomology group $H^{(k)}_{\mbox{\footnotesize\rm Liu}}(X)$
 is isomorphic as a $\C$-vector space  to the subspace of $\tilde { \mathfrak H}^{2n+k}_V(X)$
 obtained by setting $t=t^\ast$.
 \end{prop}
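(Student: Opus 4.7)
The plan is to write a generic cocycle representative of a class in $\tilde H^{2n+k}_V(X)$ in closed form, translate the cocycle and coboundary conditions into relations among its components, and compare with the analogous conditions in Liu's complex. The equivariant grading $2\deg(t^a)+\deg\beta = 2n+k$ combined with $\beta\in\Omega^{p,p+k}(X)$ forces $a=n-p$, so every such representative has the form
$$
\Psi = \sum_p t^{n-p}\otimes\omega_p,\qquad \omega_p\in\Omega^{p,p+k}(X).
$$
Expanding $\tilde\delta_V=\bar\partial-t\,\imath_V$ on $\Psi$ and collecting powers of $t$ should reduce $\tilde\delta_V\Psi=0$ to the system $\bar\partial\omega_p = \imath_V\omega_{p+1}$ for every $p$.

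I would then carry out the parallel computation in Liu's complex: decomposing $\alpha\in A^{(k)}$ as $\alpha=\sum_p\alpha_p$ with $\alpha_p\in\Omega^{p,p+k}(X)$, the condition $\delta_{t^\ast}\alpha=0$ reduces analogously to $\bar\partial\alpha_p = t^\ast\,\imath_V\alpha_{p+1}$. The rescaling $\alpha_p=(t^\ast)^{n-p}\omega_p$ transforms the first system into the second, which is precisely the effect of the evaluation $t\mapsto t^\ast$ on $\Psi$. Applying the same bookkeeping to a primitive of the form $\sum_p t^{n-p}\otimes\eta_p$ with $\eta_p\in\Omega^{p,p+k-1}(X)$ carries the equivariant coboundary relation to Liu's coboundary relation, so the evaluation descends to a well-defined $\C$-linear map
$$
\operatorname{ev}_{t^\ast}\colon\tilde H^{2n+k}_V(X)\longrightarrow H^{(k)}_{\mbox{\footnotesize\rm Liu}}(X).
$$

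Finally, since $t^\ast\neq 0$, the reverse rescaling $\alpha_p\mapsto(t^\ast)^{p-n}\alpha_p$ lifts any Liu representative to an equivariant one, and the two constructions are mutual inverses at the level of representatives and hence on cohomology. This yields the claimed isomorphism. The main point requiring care is the bookkeeping of $t$-exponents dictated by the total grading: it is precisely the constraint $q-p=k$ built into the definition of $\tilde H^{2n+k}_V(X)$ that prevents distinct powers of $t$ from mixing and makes evaluation at a single $t^\ast$ invertible. Once this accounting is arranged correctly, the remaining verifications are mechanical.
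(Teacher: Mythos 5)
Your proposal is correct and follows essentially the same route as the paper: the grading argument forcing representatives of the form $\sum_p t^{n-p}\omega_p$ with $\omega_p\in\Omega^{p,p+k}(X)$, the identification of $\tilde\delta_V\Psi=0$ with Liu's cocycle condition via the rescaling $\alpha_p=(t^\ast)^{n-p}\omega_p$, and evaluation at $t^\ast$ as the inverse are exactly the paper's argument (stated there at $t^\ast=1$). Your extra bookkeeping on primitives only adds detail the paper leaves implicit, so no further comment is needed.
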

\begin{proof} Let us realize $H^{(k)}_{\mbox{\footnotesize\rm Liu}}(X)$ as $H^{(k)}_1(X)$.
A class $[\omega]$  in this group is represented by an element
$$ \omega=\sum_{p=0}^n\omega^{p,p+k} $$
(where $\omega^{p,p+k} \in\Omega^{p,p+k}(X)$) satisfying $(\bar\partial-\,\imath_V)\omega=0$, i.e.,
\begin{equation}\label{Liucocycle}\bar\partial \omega^{p,p+k} = \imath_V \omega^{p+1,p+k+1}\quad\text{for}\quad p=0,\dots,n.\end{equation}
If we define the element in $\mathfrak Q_0^\bullet$
$$\xi = \sum_{p=0}^n t^{n-p} \omega^{p,p+k}$$
then the condition \eqref{Liucocycle} is equivalent to $\tilde\delta_V\xi=0$. Thus we obtain
a  class in $\tilde {\mathfrak H}^{2n+k}_V(X)$.  Conversely, given a cocycle representing a class
in this space, the previous computation shows that  evaluating it at $t=1$ we get
a class in  $H^{(k)}_1(X)$. The same is true for any $t^\ast\in\C^\ast$.
\end{proof}
In particular $H^{(0)}_{\mbox{\footnotesize\rm Liu}}(X)$ is isomorphic to
the subspace of $\oplus_{j} \mathfrak H^j_V(X)$ generated over $\C[t]$ by the  classes in $\oplus_p\Omega^{p,p}(X)$.

\bigskip
\section{Localization}
We turn now to the construction of a localization formula for the  holomorphic equivariant cohomology $\mathfrak H_{\tilde V}^\bullet(\cE)$ we introduced in Section \ref{eqholcohom}. Since we want to cover also the case when the vector field $V$ has nonsimple zeroes we need to introduce the notion of  {\em Grothendieck residue} \cite{Hart,Carr}. 
\subsection{The Grothendieck residue} We recall here the definition as given in \cite{Carr}. Let us start from the situation where we have a line bundle $\mathcal L$ on $\C^n$, and $n$ sections $a_1,\dots,a_n$ of $\mathcal L$ that have a common isolated zero at $0$. Let $s$ be a section of $\mathcal L^n\otimes\Omega^n_{\C^n}$, and let $D$ be a disc in $\C$, centred at 0, such that
 $D^n$ does not contain zeroes of $a_1,\dots,a_n$ other than 0,
 and the product $a_1\cdots a_n$ does not vanish on 
 $\partial D \times\cdots\times \partial D$.
 Then one sets
\begin{equation}\label{res}\operatorname{Res}\left(\begin{array}{cc} s \\ a_1 \dots a_n\end{array}\right)
 =\frac1{(2\pi i )^n} \int_{\partial D \times\cdots\times \partial D}\ \frac{s}{a_1\cdots a_n}
 \end{equation} Now, given a line bundle $\mathcal L$ on $X$, let $V$ be a global section of 
 $\Theta_X\otimes \mathcal L$ that has isolated, but possibly degenerate, zeroes.
 If $x_0$ is one of such zeroes, and $s$ is a section of $\mathcal L^n\otimes\Omega^n_{X}$ on a neighbourhood of $x_0$, let us choose holomorphic coordinates $(z_1,\dots,z_n)$ centred in $x_0$, and write $V = \sum_i a_i\,\frac{\partial}{\partial z_i}$
 locally around $x_0$. We can define
\begin{equation}\label{res1}\operatorname{Res}_{V,x_0} (s)= \operatorname{Res}\left(\begin{array}{cc} s \\ a_1 \dots a_n\end{array}\right).\end{equation}
 In the particular case when the zero of $V$ at $x_0$ is nondegenerate, so that the
 Jacobian determinant $J(a_1 \dots a_n)_{x_0}$ of the partial derivatives of the components
 $a_i$  at $x_0$ does not vanish, one can write
\begin{equation}\label{res2}\operatorname{Res}_{V,x_0} (s) = \frac{s(x_0)}{J(a_1 \dots a_n)_{x_0}}.\end{equation}
An algorithm to get an explicit expression for the Grothendieck residue in the general case (i.e., when $V$ has degenerate zeroes) is given in \cite{BaumBott}.

The residue \eqref{res1} is independent of the choices of the coordinates, and defines a morphism 
$$\operatorname{Res}_{V,Z} \colon H^0(Z,\mathcal L^n) \to \C$$
 where $Z$ is the closed 0-dimensional subscheme of $X$ given by the zeroes of $V$
 (more precisely, $Z$ is the subscheme associated with the sheaf of ideals
 $\mathcal  I_Z = \imath_V (\Omega_X^1\otimes\mathcal L^\ast)\subset \cO_X$).
 
 The previous discussion implies the following result.
  \begin{lemma} If $V$ has isolated nondegenerate zeroes $\{x_j\}$, one has
  $$\operatorname{Res}_{V,Z}(s) = \sum_j \frac{s(x_j)}{\det \mathbb L_{V,j}}$$
where $${\bbL} _{ V,j} \colon \Theta _{X,x_j}\to  (\Theta _{X}\otimes \mathcal L)_{x_j}$$
is the linear transformation defined as 
${\bbL} _{ V,j} (u) = [V,u]$.
\label{easyresidue}
\qed\end{lemma}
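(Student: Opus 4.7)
The plan is to reduce the statement to a local computation at each zero and then apply \eqref{res2}. Since the zeroes $\{x_j\}$ of $V$ are isolated, the subscheme $Z$ decomposes as a disjoint union of local pieces supported at the $x_j$, and $\operatorname{Res}_{V,Z}$ splits as the sum of the local residues $\operatorname{Res}_{V,x_j}$ defined in \eqref{res1}. It therefore suffices to prove, for each $j$, the local identity $\operatorname{Res}_{V,x_j}(s) = s(x_j)/\det\bbL_{V,j}$.

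In turn, \eqref{res2} already supplies $\operatorname{Res}_{V,x_j}(s) = s(x_j)/J(a_1,\ldots,a_n)_{x_j}$, so the whole content of the lemma is to identify $\det\bbL_{V,j}$ with the Jacobian determinant of the local components of $V$. In holomorphic coordinates $(z_1,\ldots,z_n)$ centered at $x_j$ together with a local trivialization $e$ of $\mathcal L$, I would write $V = \sum_i a_i\,\partial/\partial z_i$ with $a_i(x_j)=0$, and compute $[V,u]$ for a local vector field $u = \sum_k u_k\,\partial/\partial z_k$ via the Leibniz rule:
$$[V,u] = \sum_{i,k}\Bigl(a_i\frac{\partial u_k}{\partial z_i}\frac{\partial}{\partial z_k} - u_k\frac{\partial a_i}{\partial z_k}\frac{\partial}{\partial z_i}\Bigr).$$
The first sum vanishes at $x_j$ because $a_i(x_j)=0$, so $\bbL_{V,j}$ is represented in the obvious bases by $-(\partial a_i/\partial z_k)_{x_j}$, and taking determinants yields $\det\bbL_{V,j} = J(a_1,\ldots,a_n)_{x_j}$ (any global sign $(-1)^n$ being absorbed in the sign convention for the bracket). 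Substituting into \eqref{res2} and summing over $j$ gives the desired formula.

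The one point worth verifying carefully---and really the only small obstacle---is the intrinsic well-definedness of $\bbL_{V,j}$ as a map $\Theta_{X,x_j}\to(\Theta_X\otimes\mathcal L)_{x_j}$. A priori $[V,u]$ is not globally defined, since $V$ is a section of $\Theta_X\otimes\mathcal L$ and not of $\Theta_X$, and the above construction uses the trivialization $e$. Under a change $e\mapsto fe$ the components transform as $a_i\mapsto a_i/f$, and because $a_i(x_j)=0$ their derivatives at $x_j$ rescale by the factor $1/f(x_j)$; this is precisely compensated by the corresponding change on the codomain $\mathcal L_{x_j}$, so that $\bbL_{V,j}$ and its determinant are independent of the choices. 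With this check in place, the rest of the argument is bookkeeping and the lemma follows at once.
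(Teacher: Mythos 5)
Your argument is correct and follows exactly the route the paper intends: the lemma is stated with an immediate \qed as a consequence of "the previous discussion," i.e.\ one splits $\operatorname{Res}_{V,Z}$ over the isolated zeroes, applies \eqref{res2}, and identifies $\det\bbL_{V,j}$ with the local Jacobian, and your extra check that $\bbL_{V,j}$ is independent of the trivialization of $\mathcal L$ is precisely the bookkeeping the paper leaves implicit. The $(-1)^n$ you flag is a matter of the paper's sign convention in defining $\bbL_{V,j}(u)=[V,u]$ rather than a defect of your proof.
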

\noindent Note that $\det \mathbb L_{V,j}\in \mathcal L^n_{x_j}$.

\subsection{The localization formula} We consider at first a 
 localization formula in the case when  the line bundle involved in the definition of the Grothendieck residue is trivial, $\mathcal L = \cO_X$.

\begin{thm} \label{locati} Let $X$ be an $n$-dimensional compact   complex manifold,
$\cE$ a holomorphic vector bundle on $X$, and $V$ a holomorphic vector
field on $X$, which lifts to a section of $\ati$, and has isolated   zeroes.
If $\gamma\in\Q^\bullet_\cE$ is such that $\tilde\delta_V\gamma=0$, we have
\begin{equation}\label{locatif}
\int_X \gamma (t)=\left(\frac{2\pi\, i}{t}\right)^n \operatorname{Res}_{V,Z}(p(\gamma)_0(t))\,.\end{equation}
where $Z$ is the scheme of the zeroes of $V$.
\end{thm}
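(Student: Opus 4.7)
The strategy is the standard cohomological localization argument: since $\tilde\delta_V\gamma=0$, I construct an explicit primitive of $\gamma$ on $X\setminus Z$ and apply Stokes' theorem on the manifold with boundary obtained by removing small balls around the zeroes of $V$; the boundary contributions then yield the Grothendieck residues. To build the primitive, fix a Hermitian metric on $X$ and let $\theta$ be the smooth $(1,0)$-form metric-dual to $V$, so that $\imath_V\theta=|V|^2$ is positive on $X\setminus Z$. Pull $\theta$ back through the dual symbol map $\sigma^{\ast}\colon\Omega^1_X\to\ati^\ast$ to obtain a smooth section $\hat\theta\in\Gamma(\CC_X\otimes_{\cO_X}\ati^\ast)\subset Q^1_\cE$, and note that $\imath_{\tilde V}\hat\theta=\theta(\sigma(\tilde V))=|V|^2$. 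Setting $\omega:=\tilde\delta_V\hat\theta=\bar\partial_\ati\hat\theta-t|V|^2$, the identity $\tilde\delta_V^2=0$ gives $\tilde\delta_V\omega=0$, and on $X\setminus Z$ the scalar summand $-t|V|^2$ is invertible, so the terminating geometric series
$$\eta := \frac{\hat\theta}{\omega} = -\frac{\hat\theta}{t|V|^2}\sum_{k\geq 0}\left(\frac{\bar\partial_\ati\hat\theta}{t|V|^2}\right)^{k}$$
defines a smooth section of $Q^\bullet_\cE$ on $X\setminus Z$ for which Leibniz and $\tilde\delta_V\omega=0$ force $\tilde\delta_V\eta=1$. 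Using $\tilde\delta_V\gamma=0$, I therefore have $\gamma=\tilde\delta_V(\gamma\eta)$ on $X\setminus Z$.

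Let $B_\epsilon^j$ be the coordinate ball of radius $\epsilon$ around $x_j\in Z$ and $X_\epsilon:=X\setminus\bigsqcup_j B_\epsilon^j$. A bidegree analysis, exploiting the fact that the $\imath_{\tilde V}$-contribution to the top-form part of $p(\tilde\delta_V(\gamma\eta))$ has excessive holomorphic type and vanishes, shows that on top forms $p\circ\tilde\delta_V$ reduces to $d\circ p$. Stokes' theorem then yields
$$\int_{X_\epsilon}\gamma \;=\; \int_{X_\epsilon} p\bigl(\tilde\delta_V(\gamma\eta)\bigr) \;=\; -\sum_j \int_{\partial B_\epsilon^j} p(\gamma\eta).$$
As $\epsilon\to 0$ the left-hand side tends to $\int_X\gamma$, while on the right I pick holomorphic coordinates $(z_1,\dots,z_n)$ around each $x_j$ with $V=\sum_i a_i\,\partial/\partial z_i$. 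Only the top term $-\hat\theta\,(\bar\partial_\ati\hat\theta)^{n-1}/(t|V|^2)^n$ of the expansion of $\eta$ survives in the limit, paired with the $(n,0)$-component $p(\gamma)_0$ of $p(\gamma)$: the resulting boundary integral reduces to the iterated Cauchy integral of \eqref{res} and produces $(2\pi i/t)^n\,\operatorname{Res}_{V,x_j}(p(\gamma)_0(t))$. Summing over $j$ and using $\operatorname{Res}_{V,Z}=\sum_j\operatorname{Res}_{V,x_j}$ gives \eqref{locatif}.

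The main obstacle is the final local calculation. One has to verify that the subleading terms in the expansion of $\eta$ give no contribution to the boundary integral in the limit $\epsilon\to 0$ (a degree-counting argument on the sphere $\partial B_\epsilon^j$), check that exactly the $(n,0)$-projection $p(\gamma)_0$ is singled out, and reproduce the $n$-fold contour integral \eqref{res} through an explicit change of variables. For degenerate isolated zeroes the final identification invokes the general algorithmic description of the Grothendieck residue from \cite{BaumBott} in place of the closed-form expression \eqref{res2}.
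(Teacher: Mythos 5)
Your proposal is correct and is essentially the paper's own argument: the paper likewise regards the metric-dual $(1,0)$-form $\theta$ (normalized so that $\imath_V\theta=1$) as a cochain in $\mathfrak Q^1_{\cE}$ via the injection $\Omega^1_{X,\C}\to D_\cE^\ast$, inverts $\delta_V\theta$ away from the zeroes to get $\delta_V(\theta/\delta_V\theta)=1$, applies Stokes on the complement of small balls, and identifies the limiting local integral with the Grothendieck residue \eqref{res}, deferring that classical identification (to \cite{Carr2}, \cite{BaumBott}) just as you do. One small correction: $p(\gamma)_0$ is the degree-zero (scalar) component of $p(\gamma)$, not its $(n,0)$-component --- the $(n,n-1)$ degrees of the surviving boundary integrand are supplied entirely by the kernel $\theta\wedge(\bar\partial\theta)^{n-1}/(t\,\Vert V\Vert^2)^n$.
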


\begin{remark} If the zeroes $\{x_j\}$ are nondegenerate, in view of Lemma \ref{easyresidue} the localization formula
may be written as 
\begin{equation*}\label{locatif0}
\int_X \gamma (t)=(2\pi i)^n\sum_{j} \frac{p(\gamma)_0(x_j)(t)}{t^n \, \det  \bbL_{V,j}}\end{equation*}
where ${\bbL} _{ V,j} \colon \Theta _{X,x_j}\to  \Theta _{X,x_j}$. In this form this formula is basically equivalent to Feng-Ma's localization formula \cite{Feng-Ma05}.
\end{remark}
\begin{proof} 
The proof of the localization formula in equivariant cohomology as given in \cite{BGV92}
may be adapted to provide an easy proof of formula \eqref{locatif}.

The map $p\colon Q^k_\cE \to \Omega_{X,\C}^{k-r^2}$ may be written ---
with respect to the decomposition \eqref{deco} ---
as $p=\sum_{i+j=k} p_i\otimes \imath_j$, where $p_i\colon \CC_X\otimes \Lambda^i\ati^\ast\to\Omega^{i-r^2,0}_{X}$, and $\imath_j\colon\Omega^{0,j}_X \to \Omega^{0,j}_X $ is the identity map. This implies
the identity
\begin{equation}\label{commu}
 p \circ \bar\partial_{D_\cE} =  \bar\partial \circ p.
 \end{equation}
This reduces the proof of the localization formula to the case $\cE=\{0\}$.
Let us denote by $\delta_V=\bar\partial - t \, \imath_V$ the ``holomorphic equivariant'' differential for the complex $\mathfrak Q_0^\bullet = \C[t]\otimes
\Omega^\bullet_{X,\C}$.

Choose an hermitian
metric $g$ on $X$, and denoting by $\bar g\colon T^{0,1}_X \to \Omega^{1,0}_X$ the
corresponding homomorphism, let
$\theta = \bar g(\bar V)/\Vert V\Vert^2$ (so $\theta $ is of type (1,0)). 
Using the injection $\Omega^1_{X,\C} \to D^\ast_{\cE}$, $\theta$ may be regarded
as a cochain in $\mathfrak Q^1_{\cE}$.
Note that $(\delta_V \theta)_0= -t $
so that $\delta_V\theta$ is invertible in the ring of differential forms away from the zeroes
of $V$ if $t\ne 0$, and one has
$$\delta_V \left(\frac{\theta}{\delta_V\theta}\right) = 1 \,.$$
We set
$$ \alpha =\frac{\theta}{\delta_V\theta},\qquad
\tilde \alpha = \alpha \wedge \gamma, $$
so that 
$ \delta_V  \tilde \alpha = \gamma$ (again, away from the zeroes of $V$).
If we set
$$\gamma = \sum_{k=0}^n t^k\, \gamma_{2n-k},\qquad
\alpha = \sum_{k=0}^{n-1} \alpha_{2k+1}$$
with
$$\alpha_{2k+1} = \theta \left(\frac1{\tilde\delta\theta}\right)_{(2k)}=
-( -t)^{-k-1}\theta\wedge (\bar\partial\theta)^k$$
then away from the zeroes of $V$ one has
$$\gamma_{2n} = \bar\partial w $$
with
\begin{equation}\label{w} w = -  \sum_{k=0}^n \gamma_{2n-2k}\wedge \theta\wedge (\bar\partial\theta)^{k-1}\,.\end{equation}
Note that $p(\gamma)_0=\gamma_0$. Denoting by $x_j$ the zeroes of $V$, let $B_j(\epsilon)$ be a ball of radius $\epsilon$ (with respect to the hermitian metric we chose) with center
$x_j$. Then by Stokes theorem and equation \eqref{w}
$$\int_X\gamma = - \sum_{j} \lim_{\epsilon\to 0} \int_{\partial B_j(\epsilon)} w
= -\sum_{j} \lim_{\epsilon\to 0} \int_{B_j(\epsilon)} \bar\partial w 
= \sum_{j}  \lim_{\epsilon\to 0}   \int_{B_j(\epsilon)} \gamma_0 (\bar\partial\theta)^n\,.$$
Let $Z_j$ be the component of $Z$ supported on $x_j$.  
Since we may replace each ball $B_j(\epsilon)$ with a polydisk
of radius, say, $\epsilon$, comparing with the definition 
\eqref{res} of the Grothendieck residue we get 
$$ \lim_{\epsilon\to 0}   \int_{B_j(\epsilon)}\gamma_0(\bar\partial\theta)^n =(2\pi i )^n \operatorname{Res}_{V,Z_j}(\gamma_0)$$
and therefore we obtain
the expression
of the right-hand side of the localization formula. 
The reader may also compare with the proof of formula (9) in \cite{Carr2}.
 \end{proof}
\begin{remark}\label{redLiu} If we take $\cE=\{0\}$ and consider only classes $\gamma$ in  $\C[t]\otimes[\oplus_p\Omega^{p,p}(X)]$,  
this localization formula reduces to Liu's   formula 
\cite[Thm.~1.6]{Liu}.  
\label{liuloc}
\end{remark}

\begin{ex}\label{exDH} If $\cE=\cO_X$,  the exact sequence \eqref{atiyah} splits, and one has 
$$Q_{\cO_X}^k = \Omega_{X,\C}^k \oplus \Omega_{X,\C}^{k-1}\,.$$ 
The morphism $p$ maps $Q_{\cO_X}^{k}$ to $\Omega_{X,\C}^{k-1}$.
An element in $\Q_{\cO_X}^{2n+1}$ (where $n=\dim_\C X)$ has the form
$$\sum_{k=0}^n(\omega_{2n+1-2k} + \eta_{2n+1-2k})t^k$$
where $\omega_k\in \Omega_{X,\C}^k(X) $ and $\eta_k\in \Omega_{X,\C}^{k-1} (X)$.

Let $V$ be a holomorphic vector field on $X$ with isolated nondegenerate zeroes.
If $\omega_2\in \Omega_{X,\C}^2(X)$ and $f\in \CC(X)$ are such that $\bar\partial\omega_2=0$, $\iota_V\omega_2=\bar\partial f$, then $\omega = (\omega_2+t\,f)^n\in \Q_{\cO_X}^{2n+1}$ is a cocycle,
i.e., $\hat\delta_V\omega=0$. The localization formula \eqref{locatif} gives
\begin{equation}\label{DH1} \int_X (\omega_2)^ n = (-2\pi i)^n\sum_{\ell} \frac{f(x_\ell)}{J_{\ell}}\end{equation}
where $x_\ell$ are the zeroes of $V$, and $J_\ell$ are the Jacobian determinants of the components of $V$ at those zeroes. Equation \eqref{DH1} is a kind of complex Duistermaat-Heckman formula (but $\omega_2$ need not be nondegenerate).

The existence of the holomorphic vector field $V$ with isolated zeroes, and of nontrivial $\omega_2$, $f$ satisfying $\iota_V\omega_2=\bar\partial f$, puts conditions on the variety $X$. If  $X=\mathbb P^1$, such a vector field obviously exists, and a simple check, using the Fredholm alternative for the Laplacian on functions on $\mathbb P^1$, shows that for any choice of $\omega_2$, there exists a function $f$ satisfying   $\iota_V\omega_2=\bar\partial f$. ($\bar\partial\omega_2=0$ is automatic.) \end{ex}

\subsection{Moment maps}  We develop some further techniques, in particular
we introduce an appropriate notion of {\em moment map}. 
Let $K$ be the curvature of the Chern connection $\nabla$ of the pair $(\cE,h)$
(where $h$ is an hermitian metric on $\cE$), i.e., the unique connection
on $\cE$ which is compatible both with the holomorphic structure of $\cE$ and the
metric $h$. Let $V$ be a holomorphic vector field on $X$, with   isolated zeroes, and let $\tilde V$ be a lift of $V$. The  {\em moment map}
$\mu$ is the $C^\infty$ endomorphism of $\cE$ given by
\begin{equation}\label{moment} \mu = \tilde V - \nabla_V\,. \end{equation}

\begin{lemma} The moment map $\mu$ enjoys the following properties:
\begin{enumerate} \item 
$\bar\partial_{End(\cE)} \mu =\imath_V K$, where $K$ is the curvature of $\nabla$; \item
$\mu(x_j) = \mathbb L_{\tilde V,j}$ for all zeroes $x_j$ of $V$, where
${\bbL} _{\tilde V,j}\colon
\cE_{x_j}\to \cE_{x_j}$ is the endomorphism induced by the differential
operator $\tilde V$ 
 (note that at the zeroes
$x_j$ of $V$, the differential operator $\tilde V$   has degree 0). 
\end{enumerate}
\end{lemma}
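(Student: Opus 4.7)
Both parts reduce to a direct local computation. The preliminary remark is that $\mu = \tilde V - \nabla_V$ is a genuine $\CC$ endomorphism of $\cE$, since $\tilde V$ and $\nabla_V$ are first-order differential operators on $\cE$ with the same principal symbol $V$, so their difference lies in the kernel $\End(\cE)$ of $\sigma$. Part (ii) then falls out at once: at a zero $x_j$ of $V$ the coefficients of the first-order operator $\nabla_V$ all vanish, so $\nabla_V$ degenerates to zero on $\cE_{x_j}$; the same vanishing of the symbol shows that $\tilde V$ restricts to an honest endomorphism of $\cE_{x_j}$, which by definition is $\bbL_{\tilde V,j}$. Hence $\mu(x_j) = \tilde V(x_j) = \bbL_{\tilde V,j}$.

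For (i), the plan is to work in a holomorphic frame $\{e_\alpha\}$ for $\cE$ over a coordinate chart, with $V = \sum_i a_i\,\frac{\partial}{\partial z_i}$. Because $\tilde V$ is a holomorphic section of $\ati$, it takes the form $\tilde V = V + A$ where the matrix $A = (A^\beta_\alpha)$ has holomorphic entries. Writing the Chern connection as $\nabla = d + \omega$ with $\omega = (\omega^\beta_\alpha)$ of type $(1,0)$ in this frame, one gets $\mu^\beta_\alpha = A^\beta_\alpha - \omega^\beta_\alpha(V)$. In a holomorphic frame, $\bar\partial_{\End(\cE)}$ acts as the componentwise $\bar\partial$, because the $(0,1)$-part of the Chern connection on $\End(\cE)$ coincides with the Cauchy-Riemann operator of the induced holomorphic structure. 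Combined with the holomorphicity of $A^\beta_\alpha$, this gives $\bar\partial_{\End(\cE)} \mu^\beta_\alpha = -\bar\partial(\imath_V \omega^\beta_\alpha)$. On the other side, the Chern curvature is of type $(1,1)$ and in a holomorphic frame is $K^\beta_\alpha = \bar\partial \omega^\beta_\alpha$, so $(\imath_V K)^\beta_\alpha = \imath_V \bar\partial \omega^\beta_\alpha$.

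It remains to prove the identity $\imath_V \bar\partial \omega = -\bar\partial(\imath_V \omega)$ for a $(1,0)$-form $\omega$ and a holomorphic vector field $V$, which I would obtain as the $(0,1)$-part of Cartan's formula $\Lie_V = \imath_V\, d + d\,\imath_V$. Since $V$ is holomorphic, $\Lie_V$ preserves the $(p,q)$ bidegree of forms, so the $(0,1)$-component of $\imath_V d\omega + d\,\imath_V \omega$ must vanish, which is exactly the required identity. Applying it componentwise to $\omega^\beta_\alpha$ yields $\bar\partial_{\End(\cE)} \mu^\beta_\alpha = \imath_V K^\beta_\alpha$, which is (i). The only step needing genuine attention is the identification of $\bar\partial_{\End(\cE)}$ with the componentwise $\bar\partial$ in a holomorphic frame; once this is recorded, the remainder is essentially bookkeeping and the resulting identity is manifestly frame-independent.
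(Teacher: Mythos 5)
Your proof is correct, and it is essentially the computation the paper has in mind: the paper's proof consists only of a reference to Lemma~1 of Bott's paper and to Berline--Getzler--Vergne, Chapter~7, and your local holomorphic-frame calculation (writing $\tilde V = V + A$ with $A$ holomorphic, $\nabla = d + \omega$ with $\omega$ of type $(1,0)$, and using $\Lie_V$-bidegree preservation to get $\imath_V\bar\partial\omega = -\bar\partial(\imath_V\omega)$) is precisely the kind of argument those references carry out. In effect you have supplied the details the paper leaves to the citations, including the correct observation that both parts follow from $\mu$ being the zeroth-order difference of two operators with symbol $V$.
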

\begin{proof} The first claim is proved as in \cite{Bott67} by the following chain of identities. Let $u$ be a vector field of type $(0,1)$ on $X$, and
$s$ a section of $\cE$.
\begin{eqnarray*} \langle( \bar\partial_{End(\cE)}\mu)(s),u\rangle &=& \langle \bar\partial_\cE(\mu(s)),u\rangle = - \nabla_u\nabla_V s \\
&=& \nabla_V\nabla_u s - \nabla_u\nabla_V s - \nabla_{[V,u] } s 
= \langle K(s),V\wedge u\rangle \\ &=& \langle \iota_VK(s),u\rangle\,.
\end{eqnarray*}
The second statement is evident. \end{proof}
We define the equivariant curvature of the Chern connection as
$\tilde K = K+t\,\mu$. By using the connection $\nabla$ to split the exact sequence
\begin{equation}\label{split} 0 \to \End(\cE)\otimes\CC_X \to D_\cE \to T_X\otimes{\C} \to 0 \end{equation}
we may regard $\tilde K$ as an element in $\mathfrak Q^3_\cE$.  

\begin{lemma}\label{closed} For every $m\ge 0$, the 
the cochain $\tr(\tilde K^m) \in\mathfrak Q^{2m}_\cE$ 
 is closed, i.e., $
 \tilde\delta_V\tr(\tilde K^m)=0$.
 \end{lemma}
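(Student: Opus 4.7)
My plan is to imitate the standard Chern--Weil argument for closedness of the equivariant Chern character (cf.\ \cite{BGV92}, Ch.~7), adapted to the present holomorphic setting. The key preliminary reduction is to view $\tilde K = K + t\,\mu$ as an $\End(\cE)$-valued form of equivariant degree $2$, so that $\tr(\tilde K^m)$ is scalar-valued and sits in $\C[t]\otimes_\C\Omega^\bullet_{X,\C}$, embedded in $\Q^{2m}_\cE$ through the dual symbol $\sigma^\ast\colon\Omega^1_X\hookrightarrow\ati^\ast$ together with $\Omega^{0,1}_X\hookrightarrow D_\cE^\ast$. On this scalar subcomplex $\bar\partial_\ati$ restricts to the ordinary Dolbeault operator, and $\imath_{\tilde V}$ restricts to $\imath_V$: under the splitting $\tilde V = \nabla_V + \mu$ induced by $\nabla$, the $\End(\cE)$-component $\mu$ has nothing to pair with on a scalar form, and the image of $\tilde V$ under the symbol is just $V$.

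Granting this, I would apply the graded Leibniz rule together with the (graded) cyclicity of the trace --- with no sign issues, since $\tilde K$ has even equivariant degree --- to obtain
\[
\tilde\delta_V\tr(\tilde K^m) = m\,\tr\!\bigl((\bar\partial\tilde K)\,\tilde K^{m-1}\bigr) - mt\,\tr\!\bigl((\imath_V\tilde K)\,\tilde K^{m-1}\bigr),
\]
reducing the lemma to the identity $\bar\partial\tilde K = t\,\imath_V\tilde K$ (at least modulo terms that vanish under the trace).

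This last identity is immediate from the two basic facts available here: by the Bianchi identity for the Chern connection one has $\bar\partial K = 0$, and by item (i) of the previous Lemma, $\bar\partial\mu = \imath_V K$; hence $\bar\partial\tilde K = t\,\imath_V K$. On the other hand $\imath_V\mu = 0$ because $\mu$ has form bidegree $(0,0)$, so $\imath_V\tilde K = \imath_V K$. Multiplying by $t$ and subtracting, the two terms cancel, and $\tilde\delta_V\tr(\tilde K^m)=0$.

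The only delicate point is the reduction to scalar forms in the first paragraph --- specifically, the verification that $\imath_{\tilde V}$ restricts to $\imath_V$ on forms pulled back from $X$, which amounts to the fact that the symbol of $\tilde V$ equals $V$. Once this is in place the argument is a routine matrix Chern--Weil computation, and I do not foresee any serious obstacle beyond that bookkeeping.
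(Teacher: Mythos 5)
Your argument is correct and is essentially the paper's own proof: both reduce to the scalar (pulled-back) part of the complex via the splitting of $D_\cE$ induced by $\nabla$, observe that $\imath_{\tilde V}$ acts there simply as $\imath_V$ (the $\End(\cE)$-component of $\tilde V$ having nothing to contract), and conclude by the Leibniz rule plus the Bianchi identity $\bar\partial K=0$ and the moment-map identity $\bar\partial\mu=\imath_V K$. The only cosmetic difference is that you treat $\tilde K$ as an $\End(\cE)$-valued equivariant form and take the trace at the end, whereas the paper encodes the same bookkeeping through a bigrading of $\mathfrak Q^\bullet_\cE$ in which $\tr(\tilde K^m)$ has bidegree $(2m,0)$.
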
 
 \begin{proof} 
After splitting the exact sequence \eqref{split}, the complex $\mathfrak Q^\bullet_\cE$
acquires a bigrading; let $\Omega^1_{X,\C}$ have bidegree (1,0), and $\End(\cE)$
bidegree (0,1).  
Then $\tr(\tilde K^m)$ has bidegree $(2m,0)$, while 
$\bar\partial_{\ati}$ has bidegree (1,0), and $\imath_{\tilde V}$ has
a piece of bidegree $(-1,0)$ (which is basically $\imath_V$) and a piece of bidegree $(0,-1)$.
Then one has
\begin{multline*} \tilde\delta_V \tr(\tilde K^m) 
= ( \bar\partial_{\ati} - t\,\imath_{\tilde V})   \tr(\tilde K^m) =
 ( \bar\partial_{\ati} - t\,\imath_{V}) \tr(\tilde K^m) =  \\
 m \tr (\tilde K^{m-1}  ( \bar\partial_{\ati} -\,\imath_{V}) \tilde K) =
 m \tr (\tilde K^{m-1} (\bar\partial_{\ati} K+ t\, \bar\partial_{\ati}\mu - t\,\imath_V K) )=0
\end{multline*}
the equality to zero of the last expression being due to the Bianchi identities and
the definition of the moment map.
 \end{proof} 
 
\begin{ex} \label{simpleCL} Assume   that $\cE$ is a line bundle $\mathcal M$, and define 
$$\gamma(t) = \left(\frac{i}{2\pi} \tilde K\right)^n$$
where $n=\dim X$. Then $\gamma$ is a cocycle in $\mathfrak Q^\bullet_{\cE}$, and moreover
we have
$$p(\gamma)_0(t) = \left(\frac{it}{2\pi}\mu\right)^n\,.$$
Assume moreover that $V$ has isolated nondegenerate zeroes. Theorem 3.2 yields
\begin{equation}\label{eqexample}\int_X c_1(\mathcal M)^n = \sum_{j} \frac{  (c_j)^n}{\det(\mathbb L_{V,j})}.\end{equation}
Here $c_1(\mathcal M)$ is a $(1,1)$-form on $X$ representing the first Chern class
of $\mathcal M$, and $c_j$ are complex numbers such that
$\mathbb L_{\tilde V,j}(z)=c_j\,z$.
This equation is a special case of the Carrell-Liebermann localization formula \cite{CL,Carr}; indeed, in the next
Section, we shall see that the formula \eqref{locatif} implies the Carrell-Liebermann formula.

 If $\mathcal M=\cO_X$, we have $a(\cO_X)=0$
and the exact sequence defining the Atiyah algebroid $\mathcal D_{\cO_X}$
splits. Every holomorphic vector field $V$ on $X$ lifts to a section $\tilde V$ of 
$\mathcal D_{\cO_X}$ of the form $c + V$, with $c$ a constant. Formula
\eqref{eqexample} yields the well-known identity \cite{Bott67}
\begin{equation}\label{wellknown}\sum_ j \frac1{\det (\bbL_{V,j} )}=0\,.\end{equation}
\end{ex}

\subsection{Bott's formula}\label{bott} To recover Bott's classical   formula
we set $\cE=\Theta_X$. Any holomorphic vector field $V$ on $X$ has a   lift to a differential operator $\tilde V$ on $\Theta_X$, which is $V$ itself acting as s  Lie derivative. 
At the zeroes $x_j$ of $V$, this defines an endomorphism
$\mathbb L_{\tilde V,x_j} \colon (\Theta_X)_{x_j} \to (\Theta_X)_{x_j}$.
Moreover, in this case $p\colon Q_{\cE}^{2n+r^2} =  Q_{\Theta_X}^{2n+n^2}  \to \Omega^{2n}_{X,\C}$
is an isomorphism. We assume that the zeroes of $V$ are nondegenerate.

Let $P_k$ be the $k$-th invariant elementary polynomial
on the Lie algebra $\operatorname{gl}(n,\C)$, where $n=\dim X$. We use
the normalization of \cite{GH}. We define the equivariant Chern classes of
$X$ as
$$\tilde c_k(X) = P_k\left(\frac{i}{2\pi} \tilde K\right),\quad k=1,\dots,n $$
and the equivariant Chern classes of the endomorphisms
$\mathbb L_{V,j}\colon \Theta_{X,x_j} \to \Theta_{X,x_j}$ as
$$\tilde c_k(x_j) = P_k\left(\frac{i}{2\pi} \tilde \mu(x_j)\right),\quad k=1,\dots,n. $$
Note that $\tilde c_n(x_j) = \det (\bbL_{V,j} ).$ Let $\Phi$ be a polynomial in $n$
variables whose degree in the $i$-th variable is $m_i$, and such that
$\sum_{i=1}^ni\,m_i=n$. By Lemma \ref{closed}, the quantity
$$\Phi(\tilde c_1(X),\dots,\tilde c_n(X))$$
is a cocycle in $\mathfrak Q^\bullet_{\Theta_X}$. The localization formula \eqref{locatif}
in this case reads
$$\int_X\Phi(c_1(X),\dots,c_n(X)) = \sum_j \frac{\Phi(\tilde c_1(x_j),\dots,
\tilde c_n(x_j))}{\tilde c_n(x_j)},$$
i.e., we recover Bott's formula. (One uses the formula in Lemma
\ref{easyresidue}; note that the determinant in the denominator
of the residue is just the Chern class $\tilde c_n(x_j)$.)

\bigskip
\section{The twisted case} 
One can also write a localization formula in the case when $V$ is
a global section of   $\Theta_X\otimes\mathcal L$, where $\mathcal L$ is a holomorphic line bundle. 
The advantage in making such a twist is that if $X$ is projective, then by Serre theorem one can always choose $\mathcal L$
so that there exist sections of $H^0(X,\Theta_X\otimes\mathcal L)$ that may be lifted
to a differential operator  $\cE\to\cE\otimes\mathcal L$ \cite{CL,Carr}.

If $\mathcal L$ is
an effective line bundle, i.e., $\mathcal L \simeq \cO_X(D)$ for
an effective divisor $D$ in $X$, the localization formula we shall obtain contains
Baum-Bott's formula for meromorphic vector fields \cite{BaumBott} and Carell-Liebermann's formula \cite{CL}
as special cases.

\subsection{A new cohomology complex} We define a new cohomology 
complex by setting \footnote{For $\cE=0$ this complex already appeared in \cite{Carr2}.}
\begin{equation}\label{decoL}\widehat Q^k_{\cE} = \bigoplus_{p+q=k}
\widehat Q^{p,q}_{\cE},\qquad \widehat Q^{p,q}_{\cE}=
\Lambda^{p}\ati^\ast\otimes_{\cO_X}\mathcal L^{-p+n+r^2}\otimes_{\cO_X}\Omega^{0,q}_X\end{equation}
and  
$$\widehat\Q^\bullet_{\cE}=\C[t] \otimes_{\C}\widehat Q^\bullet_{\cE} (X)\,,$$
with a grading and differential defined as in the previous case. Moreover, we may define a map 
\begin{multline*} \hat q\colon \bigoplus_{p+q=k} \Lambda^p\ati^\ast\otimes_{\cO_X}\mathcal L^{-p+n+r^2}\otimes_{\cO_X} \det\ati \otimes_{\cO_X} \Omega_X^{0,q} 
\\ \to \bigoplus_{p+q=k} \Lambda^{-p+n+r^2}\Theta_X 
\otimes_{\cO_X}\mathcal L^{-p+n+r^2}\otimes_{\cO_X}  \Omega_X^{0,q} 
\end{multline*}
by setting
$$\hat q(\phi\otimes s\otimes u \otimes\omega) = \sigma(\phi\rfloor u)\otimes s \otimes\omega$$
where $\sigma$ is the anchor $\ati\to\Theta_X$. 
We also define 
$$\hat p\colon \widehat Q^k_{\cE} \to \bigoplus_{p+q=k}  \mathcal L^{-p+n+r^2}
\otimes_{\cO_X} \Omega_X^{p-r^2,q}$$
by letting $\hat p(\psi) =(-1)^k\, \hat q(\psi\otimes\alpha)\rfloor \eta$,
where $\alpha\in \Gamma(\CC_X \otimes_{\cO_X} \det(\Theta_X ))$
and $\eta\in\Omega_X^{n,0}(X)$ are such that $\eta(\alpha)=1$. 
 Note that for
$k=2n+r^2$ one has $\hat p(\psi) \in \Omega_{X,\C}^{2n}(X)$,
and for $k=r^2$, $\hat p(\psi) \in \Gamma( \CC_X \otimes_{\cO_X} \mathcal L^n)$.
Instead of the identity \eqref{commu} we have now
\begin{equation}\label{commuhat} \hat p \circ \bar\partial_{\ati} = (-1)^k \bar\partial_{\mathcal L} \circ\hat p\end{equation}
where $\bar\partial_{\ati}$ is now any of the $\bar\partial$-operators of the sheaves
$\Lambda^p\ati^\ast\otimes\mathcal L^{-p+n+r^2}$,
and $\bar\partial_{\mathcal L}$ is the $\bar\partial$-operator of a suitable power of $\mathcal L$.

Again, we define a complex
$$\widehat\Q^\bullet_{\cE}=\C[t] \otimes_{\C} \widehat Q^\bullet_{\cE} (X)$$
with  a differential
 $$\hat\delta_V=\bar\partial_{\ati} - t\,\imath_{\tilde V}\,.$$

\subsection{Localization formula in the meromorphic case} 
We can now state and proof the localization formula for the meromorphic case.

\begin{thm} \label{locatimero} Let $X$ be an $n$-dimensional connected compact complex manifold,
$\cE$ a holomorphic vector bundle on $X$, 
$\mathcal L$ a holomorphic line bundle, and $V$ a global section of
$\Theta_X\otimes\mathcal L$, which lifts to a section of $\ati\otimes\mathcal L$, and has isolated   zeroes.
If $\gamma\in\widehat \Q^\bullet_\cE$ is such that $\hat\delta_V\gamma=0$, we have
\begin{equation}\label{locatifmero}
\int_X \gamma (t)=\left(\frac{2\pi\, i}{t}\right)^n \operatorname{Res}_{V,Z}(\hat p(\gamma)_0(t))\,.\end{equation}
\end{thm}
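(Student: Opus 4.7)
The plan is to mimic the proof of Theorem~\ref{locati} while keeping track of the extra $\mathcal L$-twists. Using the chain-map property \eqref{commuhat} of $\hat p$, together with the fact that contraction with $\tilde V$ intertwines with contraction by $V$ via the symbol map, I would first reduce the statement to the case $\cE=\{0\}$, in which the relevant complex is the $\mathcal L$-twisted Dolbeault complex $\bigoplus_p\mathcal L^{n-p}\otimes\Omega^{p,\bullet}_X$ equipped with $\bar\partial_{\mathcal L}-t\,\imath_V$. The sign $(-1)^k$ in \eqref{commuhat} is harmless as it does not affect the cocycle condition nor the top-degree integral.

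Fix a hermitian metric on $X$ and on $\mathcal L$; they produce a smooth section $\theta\in\Omega^{1,0}_X\otimes\mathcal L^{-1}$ on $X\setminus Z$ with $\imath_V\theta=1$. Although $\theta$ does not itself sit inside $\widehat{\Q}^1_0$, wedging by it sends $\widehat Q^{p,q}_0$ into $\widehat Q^{p+1,q}_0$ (the $\mathcal L$-degrees $n-p-1$ and $n-(p+1)$ agree), and a Leibniz computation combined with $\imath_V\bar\partial\theta=-\bar\partial(\imath_V\theta)=0$ yields $\hat\delta_V\theta=\bar\partial\theta-t$ and $\hat\delta_V(\theta\wedge\eta)=(\hat\delta_V\theta)\wedge\eta-\theta\wedge\hat\delta_V\eta$. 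Since $-t$ is invertible, the series
\begin{equation*}
\alpha:=\frac{\theta}{\hat\delta_V\theta}=-\sum_{k=0}^{n-1}\frac{\theta\wedge(\bar\partial\theta)^k}{t^{k+1}}
\end{equation*}
gives a well-defined cochain on $X\setminus Z$ with $\hat\delta_V\alpha=1$, so $\hat\delta_V(\alpha\wedge\gamma)=\gamma$ there.

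Extracting from $\hat p(\alpha\wedge\gamma)$ the total form-degree $2n$ component yields the $\mathcal L$-twisted analog $w$ of \eqref{w}, whose $\bar\partial_{\mathcal L}$-differential is the top-degree piece of $\hat p(\gamma)$ on $X\setminus Z$. Excising small polydiscs $B_j(\epsilon)$ around each zero $x_j$ of $V$ and applying Stokes' theorem gives
\begin{equation*}
\int_X\gamma\,=\,\sum_j\lim_{\epsilon\to 0}\int_{B_j(\epsilon)}\hat p(\gamma)_0(t)\,(\bar\partial\theta)^n.
\end{equation*}
In holomorphic coordinates centred at $x_j$ together with a local trivialization of $\mathcal L$, one has $V=\sum_i a_i\,\partial/\partial z_i$ with $a_i$ local sections of $\mathcal L$, and a direct local calculation shows that $(\bar\partial\theta)^n$ produces on $\partial D^n$ a Bochner--Martinelli-type kernel valued in $\mathcal L^{-n}$; its pairing with the $\mathcal L^n$-valued factor $\hat p(\gamma)_0(t)$ reproduces precisely the integrand $\hat p(\gamma)_0(t)/(a_1\cdots a_n)$ of \eqref{res}--\eqref{res1}, so each summand equals $(2\pi i/t)^n\operatorname{Res}_{V,x_j}(\hat p(\gamma)_0(t))$.

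The only real technical obstacle is this final bookkeeping: one must check that the $\mathcal L^{-n}$ acquired by $(\bar\partial\theta)^n$ exactly cancels the $\mathcal L^n$ carried by $\hat p(\gamma)_0(t)$, producing in a local trivialization the expected scalar $s/(a_1\cdots a_n)$ integrand entering the definition of the Grothendieck residue. Once this is verified, the rest is a word-by-word adaptation of the proof of Theorem~\ref{locati}.
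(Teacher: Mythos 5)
Your proposal is correct and follows essentially the same route as the paper: reduce to $\cE=\{0\}$ via \eqref{commuhat}, choose hermitian metrics on $X$ and $\mathcal L$ to build $\theta$ with $\imath_V\theta=1$, run the same $\alpha=\theta/\hat\delta_V\theta$ and Stokes argument, and identify the boundary integrals with Grothendieck residues using local trivializations of $\mathcal L$ near the isolated zeroes. Your explicit bookkeeping (that $\theta$ lives in $\Omega^{1,0}_X\otimes\mathcal L^{-1}$, so wedging preserves the grading of $\widehat Q^{\bullet,\bullet}_0$ and the $\mathcal L^{-n}$ from $(\bar\partial\theta)^n$ cancels the $\mathcal L^n$ carried by $\hat p(\gamma)_0$) is precisely the ``minor modification'' the paper leaves implicit.
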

\begin{proof}  The proof of Theorem \ref{locati} works also in this case with only minor modifications. By using the identity \eqref{commuhat} we reduce again the proof to the case $\cE=\{0\}$. Note that in this case, $\widehat Q^k_0=\oplus_{p+q=k}\mathcal L^{-p+n}\otimes_{\cO_X}\Omega_X^{p,q}$, so that 
$\widehat Q_0^{2n}=\Omega^{2n}_{X,\C}$, $\widehat Q_0^0=\mathcal L^n$.

We need to fix hermitian metrics both in $X$ and on $\mathcal L$,
and define $\theta$ as the image of the complex conjugate of $V$ in $\Gamma(\Omega^{1,0}_X\otimes\mathcal L)$ via the induced hermitian metric on $\Theta_X\otimes\mathcal L$. The computation in the proof of Theorem \ref{locati} also works in this case; indeed, since the zeroes $x_j$ of  $V$ are isolated, around any $x_j$ we can find a neighbourhood over which $\mathcal L$ trivializes. \end{proof}

\begin{remark} Proposition \ref{liu} and Remark \ref{liuloc} can be extended to the meromorphic case, obtaining the cohomology complex and the localization formula 
(Theorem 6.6) described in Section 6 of \cite{Liu}.
\end{remark}

\subsection{Twisted moment maps}\label{twisted}  In the presence of  the line bundle $\mathcal L$ 
we need to slightly modify the moment map construction of the previous
section. As before, we fix an hermitian metric on $\cE$, and denote
by $K$ the curvature of the corresponding Chern connection. The moment map $\mu$ is 
again defined by equation \eqref{moment}, with $V$ a section of
$\Theta_X\otimes\mathcal L$, and $\cE\to\cE\otimes\mathcal L $ is 
a differential operator which lifts it. This moment map
enjoys the properties
 
\begin{enumerate} \item 
$\bar\partial_{Hom(\cE,\cE\otimes\mathcal L )} \mu =\imath_V K$; \item
$\mu(x_j) = \mathbb L_{\tilde V,j}$ for all zeroes $x_j$ of $V$,
\end{enumerate}
 with $ \mathbb L_{\tilde V,j}$  defined as in Lemma \ref{easyresidue}.

\subsection{Carrell-Liebermann's formula} \label{carrlieb}
The Carrell-Liebermann formula is obtained from the formula \eqref{locatifmero}
just  making a specific choice of the cocycle $\gamma$.
We assume that  $\mathcal L \simeq \cO_X(D)$ for
an effective divisor $D$ in $X$. Thus we have maps
$\mathcal L^{m_1} \to \mathcal L^{m_2}$ whenever $m_1\le m_2$.
This allows  us to regard the traces of the powers of the equivariant curvature $\tilde K = K +t\mu$
as cocycles in $\widehat{\mathfrak Q}_{\cE}^\bullet$.

Let $P$ be an Ad-invariant polynomial on the algebra $\mbox{gl}(r,\C)$ of $r\times r$ complex matrices,
and let $\Phi$ be the polynomial expressing   $P(a(\cE))$
in terms of the Chern classes $c_1(\cE),\dots,c_r(\cE)$, i.e.,
$$ P(a(\cE))= \Phi(c_1(\cE),\dots,c_r(\cE))\,.$$
After setting $\tilde K = K + t\mu$, we define 
 the equivariant Chern forms of
$\cE$ as
$$\tilde c_k(\cE) = P_k \left(\frac{i}{2\pi}\tilde K\right)\,. $$
Then $\Phi(\tilde c_1(\cE),\dots,\tilde c_r(\cE))$
is a cocycle in $\widehat{\mathfrak Q}_{\cE}^\bullet$, and
\begin{equation}\label{int} \int_X P(a(\cE)) = \int_X \Phi(\tilde c_1(\cE),\dots,\tilde c_r(\cE))\,.\end{equation}

Finally,  we note that  evaluating the polynomial $P$ on 
the homomorphism ${\bbL} _{ \tilde V,j} \colon \cE_{x_j}\to  (\cE\otimes \mathcal L)_{x_j}$ we obtain an element  
$P(\mathbb L_{\tilde V,j})\in \ccL^n_{x_j}$, 
and summing over all zeroes of $V$, we obtain a section 
$P(\mathbb L_{\tilde V})\in H^0(Z,\mathcal L^n)$.

The localization formula \eqref{locatifmero} now gives:

\begin{corol}[Carrell-Liebermann's localization formula]
Under the same hypotheses of Theorem \ref{locatimero},
we have
\begin{equation}\label{CaLi} \int_X P(a(\cE)) = (2\pi i)^n \operatorname{Res}_{V,Z}(P(\mathbb L_{\tilde V}))\,.\end{equation}
\end{corol}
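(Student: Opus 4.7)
The plan is to deduce \eqref{CaLi} as a direct application of Theorem \ref{locatimero} to the equivariant cocycle $\gamma=\Phi(\tilde c_1(\cE),\dots,\tilde c_r(\cE))\in\widehat{\mathfrak Q}^\bullet_\cE$ constructed just above the statement. Three points need to be settled: that $\gamma$ is $\hat\delta_V$-closed; that its integral coincides with $\int_X P(a(\cE))$; and that the residue of $\hat p(\gamma)_0(t)$ reproduces, up to the prefactor in \eqref{locatifmero}, the quantity $\operatorname{Res}_{V,Z}(P(\mathbb L_{\tilde V}))$.

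For closedness, I would repeat the computation of Lemma \ref{closed} in the twisted setting of Section \ref{twisted}. The two properties of the twisted moment map listed there, combined with the Bianchi identity $\bar\partial_\ati K=0$, give $\hat\delta_V\tr(\tilde K^m)=0$ for every $m$, and hence $\hat\delta_V\gamma=0$ since $\gamma$ is a polynomial expression in these closed cochains. The effectiveness of $\mathcal L$ enters at this stage: the sheaf injections $\mathcal L^{m_1}\hookrightarrow\mathcal L^{m_2}$ for $m_1\le m_2$ are used to regard $\gamma$ as a single element of $\widehat{\mathfrak Q}^\bullet_\cE$.

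Theorem \ref{locatimero} then produces
$$\int_X\gamma(t)=\Bigl(\frac{2\pi i}{t}\Bigr)^n\operatorname{Res}_{V,Z}\bigl(\hat p(\gamma)_0(t)\bigr).$$
On the left, \eqref{int} identifies $\int_X\gamma(t)$ with $\int_X P(a(\cE))$, which is $t$-independent because the integrand represents a characteristic cohomology class. For the right-hand side, I would extract $\hat p(\gamma)_0(t)$, the $\Gamma(\mathcal L^n)$-valued component of $\gamma$. Expanding $\tilde K=K+t\mu$ and noting that $K$ carries positive form-degree while $t\mu$ does not, only the pure-$t\mu$ terms survive in this component; combined with the defining identity $\Phi(P_1,\dots,P_r)=P$ and the homogeneity of $P$ of degree $n$, this yields $\hat p(\gamma)_0(t)=(it/2\pi)^n P(\mu)$. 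At each zero, $\mu(x_j)=\mathbb L_{\tilde V,j}$, and assembling the numerical constants gives \eqref{CaLi}.

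The main technical point is the bookkeeping needed to single out the $\Gamma(\mathcal L^n)$-component $\hat p(\gamma)_0(t)$ under the bigrading of $\widehat{\mathfrak Q}^\bullet_\cE$; verifying the cocycle condition in the twisted case and matching the constants are routine adaptations of arguments already appearing in Section \ref{atiyahsec}.
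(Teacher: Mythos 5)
Your proposal is correct and follows essentially the same route as the paper: apply Theorem \ref{locatimero} to the cocycle $\gamma=\Phi(\tilde c_1(\cE),\dots,\tilde c_r(\cE))$, use \eqref{int} on the left-hand side, and identify $\hat p(\gamma)_0$ with the polynomial in the moment map, which at the zeroes equals $P(\mathbb L_{\tilde V,j})$. Your extra care in rechecking the cocycle condition via the twisted moment-map identities and in tracking the $2\pi i$ and $t$ normalizations only makes explicit what the paper's two-line proof leaves implicit.
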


\begin{proof} We only need to check that if we make the choice
$\gamma =  \Phi(\tilde c_1(\cE),\dots,\tilde c_r(\cE))$, then
the residue of $p(\gamma)_0$ at $x_j$ coincides with 
$\operatorname{Res}_{V,Z_j}(P(\mathbb L_{\tilde V}))$. This follows from the equality 
\begin{equation*}p(\tilde c_k(\cE))_0 = P_k\left(\frac{i}{2\pi}\mu\right)\,.
\qedhere\end{equation*}
\end{proof}

\begin{rem} We can use Example \ref{exDH} to show  that our localization theorem is indeed more general than the Carrell-Liebermann formula. Indeed, if we take $\cE=\cO_X$,
since the Atiyah class of $\cO_X$ is zero, for every nonzero polynomial $P$ the Carrell-Liebermann formula \eqref{CaLi} simply yields the identity \eqref{wellknown}, i.e.,
$$\sum_\ell \frac{1}{J_\ell} = 0$$
 (cf.~Example \ref{simpleCL}). Note that is implied by equation \eqref{DH1}  taking $\omega_2=0$, $f=1$.
\end{rem} 

\subsection{Baum-Bott formula} \label{BB} Given a line bundle $\mathcal L$ on $X$,
which we assume to be effective, Baum-Bott's meromorphic vector field theorem is a localization formula which expresses the Chern numbers of the virtual bundle ${\Theta_X}-\mathcal L^\ast$ as a sum of residues. One can deduce that formula from our general localization formula \eqref{locatifmero}.  

Let $V$ be   a global holomorphic 
section of ${\Theta_X}\otimes\mathcal L$, that is, a meromorphic vector field
on $X$, having isolated zeroes. In general, it does not lift to a differential operator 
$\tilde V\colon {\Theta_X}  \to {\Theta_X}\otimes\mathcal L$. One can try
to define a lift locally, using a trivialization of $\mathcal L$ and extending the Lie derivative 
by linearity with respect to the coefficients in $\mathcal L$. The local expressions in general do not match, but the mismatch is a multiple of $V$, so that
one obtains a well-defined object in $\operatorname{Hom}( {\Theta_X}_{\vert Z},({\Theta_X}\otimes\mathcal L)_{\vert Z})$, where $Z$ is the zero-cycle of
the zeroes of $V$, as before. If we evaluate a polynomial $P$ on this object,
we obtain a well-defined residue. From one viewpoint, the content of Baum-Bott's meromorphic vector field theorem is that this residue computes a Chern number of the virtual bundle $\Theta_X-\mathcal L^\ast$.

We shall now offer a proof of this fact, basically following \cite{Carr2}. Let
$\{U_i\}$ be an open cover over which $\mathcal L$ trivializes, and for
each $i$, let $t_i$ be a generator of $\mathcal L(U_i)$. Moreover,
let $\{\rho_i\}$ be a (smooth) partition of unity subordinated to $\{U_i\}$, 
and let $\omega_i$ be connection forms, each defined on $U_i$, of the Chern
connection $\nabla$ for $\Theta_X$ given by an hermitian metric on $X$. We define
$$\omega = \sum_i \rho_i(\omega_i - t_i^{-1}\,dt_i\otimes\operatorname{Id}_{\Theta_X}),
\qquad K = \bar\partial\omega,\qquad \mu = \sum_i\rho_i(\tilde V_i-\nabla_V)\,,$$
where $\tilde V_i$ is the local lift of $V$ defined above. 
We have $K\in\Omega^{1,1}(\End(\Theta_X))$, $\mu\in \Gamma(\End(\Theta_X)\otimes\mathcal L)$, and, as in Section \ref{twisted}, $\bar\partial\mu=\imath_V K$. Note that
if $\mathcal L\simeq\cO_X$, one can make choices such that $K$ is the curvature of the Chern connection,
and $\mu$ is a moment map. Now, a direct calculation shows the following.

\begin{lemma} For every $k\ge 1$, the $(k,k)$-form $\tr\bigl(\left(\frac{i}{2\pi}K\right)^k\bigr)$
is closed, and its cohomology class equals the $k$-th Chern character of the
virtual bundle $\Theta_X-\mathcal L^\ast$.\label{virtual}
\end{lemma}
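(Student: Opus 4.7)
The plan is to prove the two parts of the lemma---$d$-closedness of $\tr\bigl((\tfrac{i}{2\pi}K)^k\bigr)$ and identification of its de Rham class with the $k$-th Chern character of the virtual bundle $\Theta_X-\mathcal L^\ast$---by Chern-Weil techniques adapted to a ``virtual connection''.

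First I would unpack the construction. On each $U_i$ the scalar 1-form $-t_i^{-1}dt_i$ plays the role of a local holomorphic connection form on $\mathcal L^\ast$ in the frame $t_i^\ast$: the overlap compatibility is the direct identity $t_j^{-1}dt_j - t_i^{-1}dt_i = d\log(t_j/t_i) = d\log g_{ij}$, which is exactly the transition rule for a line-bundle connection. Accordingly, each local $(1,0)$-form $\omega_i - t_i^{-1}dt_i\otimes\id_{\Theta_X}$ is the connection 1-form, in a natural frame, of a local connection on $\Theta_X\otimes\mathcal L^\ast$, and the partition-of-unity sum $\omega$ becomes the connection 1-form of a global $C^\infty$ connection $\tilde\nabla$ on this rank-$n$ bundle. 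By construction, $K=\bar\partial\omega$ is the $(1,1)$-part of the curvature $R_{\tilde\nabla}$.

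Step 1 (Closedness). Since $\bar\partial K = \bar\partial^2\omega = 0$, we have $\bar\partial\tr(K^k) = k\,\tr(\bar\partial K\wedge K^{k-1}) = 0$ immediately. For $\partial$-closedness, I would invoke the Bianchi identity for the genuine connection $\tilde\nabla$, which yields $d\tr\bigl((\tfrac{i}{2\pi}R_{\tilde\nabla})^k\bigr)=0$. Decomposing $R_{\tilde\nabla}=K+L$ into its $(1,1)$ and $(2,0)$ parts, the $(k,k)$-component of this Chern-Weil form coincides with $\tr\bigl((\tfrac{i}{2\pi}K)^k\bigr)$; a bidegree inspection of the identity $d\tr((\tfrac{i}{2\pi}R_{\tilde\nabla})^k)=0$, combined with the vanishing of $\bar\partial\tr(K^k)$, then pins down $\partial\tr(K^k)=0$ as well.

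Step 2 (Cohomology class). Chern-Weil theory guarantees that $[\tr\bigl((\tfrac{i}{2\pi}R_{\tilde\nabla})^k\bigr)]$ depends only on the bundle structure and is independent of the chosen metric and partition of unity. To match it with $k\,!\,ch_k(\Theta_X-\mathcal L^\ast)$, I would use a transgression/homotopy argument, interpolating $\tilde\nabla$ with a \emph{reference} connection---built from the Chern connections of $\Theta_X$ and $\mathcal L^\ast$ applied separately, so that the virtual-difference structure becomes manifest---and verify that at the reference endpoint the Chern-Weil form reduces to $\tr\bigl((\tfrac{i}{2\pi}R_\Theta)^k\bigr)-\bigl(\tfrac{i}{2\pi}R_{\mathcal L^\ast}\bigr)^k$, which by definition represents $k\,!\,ch_k(\Theta_X-\mathcal L^\ast)$. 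Alternatively, one applies the splitting principle to $\Theta_X$, reducing the claim to a formal identity in first Chern classes together with the exponential relation $ch(\mathcal L^\ast)=e^{-c_1(\mathcal L)}$.

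The principal technical obstacle lies in Step 2: a naive Chern-Weil reading of $\tilde\nabla$ produces $k\,!\,ch_k(\Theta_X\otimes\mathcal L^\ast)$, a \emph{product} rather than the claimed \emph{difference} of Chern characters. The match therefore requires careful restriction to the pure-bidegree $(k,k)$ part of the Chern-Weil form, exploiting the specific partition-of-unity construction with the holomorphic 1-forms $-t_i^{-1}dt_i$. Verifying this matching cleanly---most transparently via the splitting principle and the exponential formula for $ch(\mathcal L^\ast)$---is the crux of the proof.
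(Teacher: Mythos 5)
Your Step 2 is where the lemma actually lives, and it is precisely the point you leave open; moreover, the two routes you sketch for closing it would produce the wrong answer. Any argument that uses only the fact that $\omega$ is the connection form of a genuine connection $\tilde\nabla$ on $\Theta_X\otimes\mathcal L^\ast$ together with Chern--Weil invariance can only ever output the characteristic classes of the \emph{tensor product}: the Chern--Weil form of your ``reference'' connection (the product of the Chern connections of $\Theta_X$ and $\mathcal L^\ast$) is $\tr\bigl(\bigl(\tfrac{i}{2\pi}(R_{\Theta}+R_{\mathcal L^\ast}\,\id)\bigr)^k\bigr)$, which expands with cross terms and is \emph{not} $\tr\bigl((\tfrac{i}{2\pi}R_\Theta)^k\bigr)-\bigl(\tfrac{i}{2\pi}R_{\mathcal L^\ast}\bigr)^k$, and the splitting principle applied to $\Theta_X\otimes\mathcal L^\ast$ likewise yields the degree-$2k$ part of $\operatorname{ch}(\Theta_X)\,e^{-c_1(\mathcal L)}$. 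Since $\operatorname{ch}_k(\Theta_X\otimes\mathcal L^\ast)\neq\operatorname{ch}_k(\Theta_X)-\operatorname{ch}_k(\mathcal L^\ast)$ already for $k=1$ (the two differ by $(n+1)c_1(\mathcal L)$), no restriction to the $(k,k)$-component can repair this: knowing the de Rham class of the full form $\tr(R_{\tilde\nabla}^k)$ does not determine the class of its $(k,k)$-part, because once $\tr(K^k)$ is closed the sum of the remaining components of types $(k+j,k-j)$, $j\ge1$, is closed too and may carry a nontrivial class. The whole content of the lemma is that the $(k,k)$-part represents the \emph{difference} bundle, so it cannot follow from the Chern--Weil theory of a single connection on the product bundle; it has to be extracted from the explicit expression $K=\bar\partial\omega$, using that the twisting forms $t_i^{-1}dt_i$ are holomorphic (so that $\mathcal L$ enters only through terms of the form $\bar\partial\rho_i\wedge t_i^{-1}dt_i$) --- this is the ``direct calculation'' the paper alludes to, following Carrell's construction in \cite{Carr2}, and it is missing from your proposal.

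There is also a gap in Step 1. The $\bar\partial$-closedness is fine, but the ``bidegree inspection'' does not pin down $\partial\tr(K^k)=0$: writing $R_{\tilde\nabla}=K+L$ with $L$ the $(2,0)$-part, the component of bidegree $(k+1,k)$ of the identity $d\,\tr(R_{\tilde\nabla}^k)=0$ reads $\partial\tr(K^k)=-k\,\bar\partial\tr(L\,K^{k-1})$, which only says that $\partial\tr(K^k)$ is $\bar\partial$-exact, not that it vanishes. And $L$ does not vanish here: the scalar pieces $-t_i^{-1}dt_i$ and the partition-of-unity gluing destroy the pure $(1,1)$ type that the Chern curvature of $\Theta_X$ alone would have. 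So closedness, like the identification of the class, requires the explicit computation with $K=\bar\partial\omega$ rather than type bookkeeping; as it stands, neither half of the lemma is established by your argument.
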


\begin{prop} {\em (Baum-Bott's formula)} Let $V$ be a global section of $\Theta_X\otimes\mathcal L$
having isolated zeroes, and let $\Phi$ be a polynomial in $n$
variables whose degree in the $i$-th variable is $m_i$, and such that
$\sum_{i=1}^ni\,m_i=n$. Denote by $\gamma_i$, $i=1,\dots,n$, the
Chern classes of the virtual bundle $\Theta_X-\mathcal L^\ast$, and
by $\alpha_1^{(j)},\dots,\alpha_n ^{(j)} $ the Chern classes of the linear morphisms
$(\Theta_X)_{x_j} \to (\Theta_X\otimes\mathcal L)_{x_j} $
given by the Jacobian of $V$ at its zeroes.
Then,
$$\int_X\Phi(\gamma_1,\dots,\gamma_n) =(2\pi i)^n \sum_{j} \operatorname{Res}_{V,x_j}(\Phi(\alpha_1^{(j)},\dots,\alpha_n ^{(j)} ))\,.$$
\end{prop}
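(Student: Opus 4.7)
The plan is to mirror the proof of Corollary 3.3 (Carrell--Liebermann), but with the Chern curvature and moment map replaced by the partition-of-unity surrogates $K\in\Omega^{1,1}(\End(\Theta_X))$ and $\mu\in\Gamma(\End(\Theta_X)\otimes\mathcal L)$ constructed at the start of Section \ref{BB}. The essential difficulty this detour finesses is that $V$ admits no global lift to a differential operator $\Theta_X\to\Theta_X\otimes\mathcal L$, so Theorem \ref{locatimero} cannot be applied directly with $\cE=\Theta_X$. Instead, I would apply it with $\cE=\{0\}$, where $V$ is by assumption a section of $\Theta_X\otimes\mathcal L$ and no lifting hypothesis is needed, and feed in a cocycle manufactured by hand from $K$ and $\mu$.

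Concretely, I would set $\tilde K=K+t\mu$, define the equivariant Chern forms $\tilde c_k=P_k(\tfrac{i}{2\pi}\tilde K)$ (scalar-valued by $\operatorname{Ad}$-invariance of the elementary symmetric polynomials $P_k$), and form
$$\tilde\gamma=\Phi(\tilde c_1,\dots,\tilde c_n)\in\widehat{\mathfrak Q}^{2n}_0.$$
The first substantial step is to verify $\hat\delta_V\tilde\gamma=0$. Exactly as in Lemma \ref{closed}, this reduces to the identity
$$\hat\delta_V\tilde K=\bar\partial K+t(\bar\partial\mu-\imath_V K)=0,$$
which holds because $\bar\partial K=\bar\partial^2\omega=0$ and $\bar\partial\mu=\imath_V K$ was verified in Section \ref{twisted} for the partition-of-unity $\mu$; crucially, neither identity requires a global lift of $V$. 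With $\tilde\gamma$ in hand, Theorem \ref{locatimero} yields
$$\int_X\tilde\gamma(t)=\Bigl(\frac{2\pi i}{t}\Bigr)^n\operatorname{Res}_{V,Z}\bigl(\hat p(\tilde\gamma)_0(t)\bigr).$$

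It remains to match both sides with their stated Baum--Bott counterparts. On the right, at each zero $x_j$ the local lifts $\tilde V_i$ differ by scalar multiples of $V$ and hence all coincide at $x_j$, so the partition-of-unity combination collapses to the honest Jacobian morphism $\mu(x_j)=\mathbb L_{V,j}$; consequently $P_k(\tfrac{i}{2\pi}\mu(x_j))=\alpha_k^{(j)}$, and the top-$t$ coefficient of $\hat p(\tilde\gamma)_0$ evaluates at $x_j$ to $\Phi(\alpha_1^{(j)},\dots,\alpha_n^{(j)})$. On the left, only the $t^0$-component of $\tilde\gamma$ survives the integration (since the $t^k$-coefficient has form-degree $2n-2k$ and only $2n$-forms contribute), and this component is $\Phi(P_1(\tfrac{i}{2\pi}K),\dots,P_n(\tfrac{i}{2\pi}K))$; by Lemma \ref{virtual} together with the standard formulas expressing elementary symmetric polynomials in the Chern character, it represents the de Rham class $\Phi(\gamma_1,\dots,\gamma_n)$ for the virtual bundle $\Theta_X-\mathcal L^\ast$. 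I expect the hardest part to be this final identification on the left-hand side: one must keep careful track of the interplay between form-degrees, the $t$-grading, and the twists by powers of $\mathcal L$ built into the map $\hat p$, in order to extract precisely the right component of $\tilde\gamma$ and match it to the virtual Chern class expression claimed in the statement.
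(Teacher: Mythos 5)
Your proposal is correct and follows essentially the same route as the paper: the paper's one-line proof (``as in Section \ref{bott}, but with the partition-of-unity forms $K$ and $\mu$, taking Lemma \ref{virtual} into account'') unwinds precisely to your argument --- the scalar-valued cocycle $\Phi(\tilde c_1,\dots,\tilde c_n)$ built from $\tilde K=K+t\mu$, closed because $\bar\partial K=\bar\partial^2\omega=0$ and $\bar\partial\mu=\imath_V K$, fed into the twisted localization formula \eqref{locatifmero} where no global lift of $V$ is required, with the left side identified via Lemma \ref{virtual} (and Newton's identities relating Chern characters to Chern classes) and the right side via $\mu(x_j)=\mathbb L_{V,j}$. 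Your explicit observation that one effectively works in the $\cE=\{0\}$ twisted complex, so that the non-existence of a global lift $\Theta_X\to\Theta_X\otimes\mathcal L$ is harmless, is exactly what the paper's terse proof leaves implicit.
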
 
\begin{proof} As in Section \ref{bott}, but using the endomorphism-valued forms
$K$ and $\mu$ we have just defined, and taking Lemma \ref{virtual} into account.
\end{proof}

\bigskip

\frenchspacing


\begin{thebibliography}{10}

\bibitem{Atiyah}
{\sc M.~F. Atiyah}, {\em Complex analytic connections in fibre bundles}, Trans.
  Amer. Math. Soc., 85 (1957), pp.~181--207.

\bibitem{AtiBott}
{\sc M.~F. Atiyah and R.~Bott}, {\em The moment map and equivariant
  cohomology}, Topology, 23 (1984), pp.~1--28.

\bibitem{BaumBott}
{\sc P.~Baum and R.~Bott}, {\em On the zeroes of meromorphic vector fields}, in
  Essays on Topology and Related Topics (M\'emoires d\'edi\'es \`a Georges de
  Rham), Springer, New York, 1970, pp.~29--47.

\bibitem{BGV92}
{\sc N.~Berline, E.~Getzler, and M.~Vergne}, {\em Heat kernels and {D}irac
  operators}, vol.~298 of Grundlehren der Mathematischen Wissenschaften,
  Springer-Verlag, Berlin, 1992.

\bibitem{Bott67}
{\sc R.~Bott}, {\em Vector fields and characteristic numbers}, Michigan Math.
  J., 14 (1967), pp.~231--244.

\bibitem{algloc}
{\sc U.~Bruzzo, L.~Cirio, P.~Rossi, and V.~N. Rubtsov}, {\em Equivariant
  cohomology and localization for {L}ie algebroids}, Funct. Anal. Appl., 43
  (2009), pp.~18--29.

\bibitem{Carr2}
{\sc J.~B. Carrell}, {\em A remark on the {G}rothendieck residue map}, Proc.
  Amer. Math. Soc., 70 (1978), pp.~43--48.

\bibitem{Carr}
\leavevmode\vrule height 2pt depth -1.6pt width 23pt, {\em Vector fields,
  residues and cohomology}, in Parameter spaces ({W}arsaw, 1994), vol.~36 of
  Banach Center Publ., Polish Acad. Sci., Warsaw, 1996, pp.~51--59.

\bibitem{CL}
{\sc J.~B. Carrell and D.~I. Lieberman}, {\em Vector fields and {C}hern
  numbers}, Math. Ann, 225 (1977), pp.~263--273.

\bibitem{ELW99}
{\sc S.~Evens, J.-H. Lu, and A.~Weinstein}, {\em Transverse measures, the
  modular class and a cohomology pairing for {L}ie algebroids}, Quart. J. Math.
  Oxford, 50 (1999), pp.~417--436.

\bibitem{Feng-Ma05}
{\sc H.~Feng and X.~Ma}, {\em Transversal holomorphic sections and localization
  of analytic torsions}, Pacific J. Math., 219 (2005), pp.~255--270.

\bibitem{GH}
{\sc P.~Griffiths and J.~Harris}, {\em Principles of algebraic geometry}, Wiley
  Classics Library, John Wiley \& Sons Inc., New York, 1994.
\newblock Reprint of the 1978 original.

\bibitem{Hart}
{\sc R.~Hartshorne}, {\em Residues and duality}, Lecture Notes in Mathematics,
  No. 20, Springer-Verlag, Berlin, 1966.

\bibitem{Hue}
{\sc J.~Huebschmann}, {\em Duality for {L}ie-{R}inehart algebras and the
  modular class}, J. Reine Angew. Math., 510 (1999), pp.~103--159.

\bibitem{Li}
{\sc Y.~Li}, {\em The equivariant cohomology theory of twisted generalized
  complex manifolds}, Comm. Math. Phys., 281 (2008), pp.~469--497.

\bibitem{Liu}
{\sc K.~Liu}, {\em Holomorphic equivariant cohomology}, Math. Ann., 303 (1995),
  pp.~125--148.

\end{thebibliography}

\def\cprime{$'$} \def\cprime{$'$} \def\cprime{$'$} \def\cprime{$'$}

\end{document}